\documentclass[11pt,a4paper]{article}
\usepackage[OT1]{fontenc}
\usepackage[english]{babel}
\usepackage{amsmath}
\usepackage{amsfonts}
\usepackage{amsthm}

\usepackage{sectsty}
\sectionfont{\small}

\usepackage[bookmarks,pdfpagelabels,linkbordercolor={0.1 0.8 0.4}]{hyperref}

\topmargin0cm
\textheight22cm
\footnotesep.3cm
\evensidemargin0cm
\oddsidemargin0cm
\textwidth16cm

\def\RR{\mathbb{R}}
\def\CC{\mathbb{C}}
\def\ZZ{\mathbb{Z}}

\def\h{ {\cal H} }

\def\b{ {\cal B} }

\def\s{ {\cal S} }

\def\noi{\noindent}

\def\Re{ \mathrm{Re} }
\def\Im{ \mathrm{Im} }

\def\g1{ \mathfrak{g}_1  }

\def\om{\Omega}

\newcommand{\PI}[2]{\left\langle #1 , #2 \right\rangle}

\newtheorem{teo}{Theorem}[section]

\newtheorem{lem}[teo]{Lemma}
\newtheorem{coro}[teo]{Corollary}

\theoremstyle{definition}
\newtheorem{rem}[teo]{Remark}

\begin{document}

%\title{A spectral characterization of normal operator logarithms}
\title{On normal operator logarithms}
\date{}
\author{Eduardo Chiumiento \footnote{Partially supported by Instituto Argentino de Matem\'atica `Alberto P. Calder\'on' and CONICET}}

\maketitle

\abstract{Let $X,Y$ be normal bounded operators on a Hilbert space such that $e^X=e^Y$.  If the spectra of $X$ and $Y$ are contained
in the  strip $\s$ of the complex plane defined by $|\Im(z)|\leq \pi$, we show that $|X|=|Y|$. If $Y$ is only assumed to be bounded, then $|X|Y=Y|X|$. We give a formula for $X-Y$ in terms of spectral projections of $X$ and $Y$ provided that $X,Y$ are normal and $e^X=e^Y$. 
If $X$ is an unbounded self-adjoint operator, which does not have  $(2k+1) \pi$, $k \in \ZZ$,  as eigenvalues, and $Y$ is normal with spectrum in $\s$   satisfying $e^{iX}=e^Y$, then $Y \in \{ \, e^{iX} \, \}''$. We  give alternative proofs and generalizations of results on normal operator exponentials proved by Ch. Schmoeger.

\medskip

{\footnotesize
\noi \textit{AMS classification:  47B15;   47A60.}

\medskip

\noi \textit{Keywords:} Exponential map, normal operator, spectral theorem.}

%\footnote{}

%\setcounter{section}{1} 

\section{Introduction} 

Solutions to the  equation $e^X=e^Y$ were studied by E. Hille \cite{Hille} in the general setting of unital Banach algebras. 
Under the assumption that the spectrum $\sigma(X)$ of $X$ is incongruent (mod $2\pi i$), which means that  $\sigma(X)\cap \sigma(X + 2k\pi i)=\emptyset$ for all $k=\pm 1, \pm 2 , \ldots$, he proved 
that $XY=YX$ and  there exist idempotents $E_1 , E_2 , \ldots , E_n$ commuting with $X$ and $Y$ such that 
\[   X-Y = 2\pi i \sum_{j=1}^n k_j E_j , \, \, \, \, \, \, \,  \, \, \, \, \, \,  \sum_{j=1}^n E_j = I, \, \, \,  \, \, \,  \, \, \, \, \, \, \,  E_i E_j =\delta_{ij},  \]  
where  $k_1, k_2 , \ldots , k_n$ are different integers. If the hypothesis on the spectrum is removed,  it is possible to find non commuting logarithms (see e.g. \cite{Hille, schmoeger laa}). In the setting of Hilbert spaces, when $X$ is a normal operator, the above assumption on the spectrum can be weakened. In fact, Ch. Schmoeger \cite{schmoeger proc}  proved that $X$ belongs to the double commutant of $Y$ provided that $E_X(\sigma(X)\cap \sigma(X + 2k\pi i))=0$, $k=1,2, \ldots$, where $E_X$ is the spectral measure of $X$. We also refer to \cite{paliogiannis} for a generalization of this  result by F. C. Paliogiannis.

In this paper, we study the operator equation $e^X=e^Y$ in the setting of Hilbert spaces under the assumption that the spectra of $X$ and $Y$ belong to a 
non-injective domain of the complex exponential map. Our results include the relation between the modulus of $X$ and $Y$ (Theorem \ref{mod}), a formula for the difference of two normal logarithms in terms of their spectral projections (Theorem \ref{dif}) and  commutation relations when $X$ is a skew-adjoint unbounded operator (Theorem \ref{unbounded}).  The proofs of these results are elementary.  In fact, they  rely on the spectral theorem for normal operators. This approach allows us to give a generalization (Corollary \ref{first thm}) and an alternative proof (Corollary \ref{square}) of two results by Ch. Schmoeger (see \cite{schmoeger laa}).

\section{Notation and preliminaries}

Let $(\h, \PI{\,\cdot\,}{\,\cdot}\,)$   be a complex Hilbert space and $\b(\h)$ be the algebra of bounded operators on $\h$. The spectrum of an operator $X$ is denoted  by $\sigma(X)$, and the set of eigenvalues of $X$ is denoted by $\sigma_p(X)$. The real part of  $X \in \b(\h)$ is  $\Re(X)=\frac{1}{2}(X+X^*)$ and its imaginary part is $\Im(X)=\frac{1}{2}(X-X^*)$. 

If $X$ is a bounded or unbounded normal  operator on $\h$,  we denote by $E_X$  the spectral measure of $X$.  Recall that $E_X$ is defined  on the Borel subsets of $\sigma(X)$, but we may think that $E_X$ is defined on all the 
Borel subsets of $\CC$. Indeed, we can set $E_X(\Omega)=E_X(\Omega \cap \sigma(X))$ for every  Borel set $\Omega \subseteq \CC$. 
Our first lemma is a generalized version of  \cite[Ch. XII Ex. 25]{rudin}, where the normal operator can now be unbounded.

\begin{lem}\label{relation esp}
Let $X$ be a (possibly unbounded) normal operator on $\h$ and $f$ a bounded Borel function on $\sigma(X)$.  Then
\[  
E_{f(X)}(\Omega)=E_X (f^{-1}(\Omega)), \]
for every Borel set $\Omega\subseteq \CC$.
\end{lem}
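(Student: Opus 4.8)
The plan is to exhibit the right-hand side as a bona fide spectral measure representing the bounded normal operator $f(X)$, and then to invoke the uniqueness part of the spectral theorem. To this end I would define
\[ \mu(\Omega) := E_X\big(f^{-1}(\Omega)\big), \qquad \Omega\subseteq\CC \text{ Borel}. \]
Since $f$ is a Borel function on $\sigma(X)$, each preimage $f^{-1}(\Omega)$ is a Borel subset of $\sigma(X)$, so $\mu(\Omega)$ is a well-defined orthogonal projection, and the whole point of the proof is to show that $\mu$ coincides with $E_{f(X)}$.

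First I would check that $\mu$ is itself a spectral measure on the Borel subsets of $\CC$. This reduces to the elementary set-theoretic behaviour of preimages: $f^{-1}(\CC)=\sigma(X)$ gives $\mu(\CC)=E_X(\sigma(X))=I$; the identity $f^{-1}\big(\bigcup_n \Omega_n\big)=\bigcup_n f^{-1}(\Omega_n)$ together with the fact that preimages of disjoint sets are disjoint yields countable additivity in the strong operator topology; and $f^{-1}(\Omega_1\cap\Omega_2)=f^{-1}(\Omega_1)\cap f^{-1}(\Omega_2)$ gives the multiplicativity $\mu(\Omega_1\cap\Omega_2)=\mu(\Omega_1)\mu(\Omega_2)$. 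Each of these transfers immediately from the corresponding property of $E_X$.

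Next I would establish the change-of-variables formula
\[ \int_{\CC} g(z)\, d\mu(z) = \int_{\sigma(X)} (g\circ f)(\lambda)\, dE_X(\lambda) \]
for every bounded Borel function $g$ on $\CC$. For an indicator $g=\chi_\Omega$ this is exactly the definition of $\mu$, since $\chi_\Omega\circ f=\chi_{f^{-1}(\Omega)}$; by linearity it extends to simple functions, and by uniform approximation of bounded Borel functions by simple functions it passes to the general case. Taking $g(z)=z$ then gives
\[ \int_{\CC} z\, d\mu(z) = \int_{\sigma(X)} f(\lambda)\, dE_X(\lambda) = f(X), \]
the last equality being the functional-calculus definition of $f(X)$. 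As $f$ is bounded, $f(X)$ is a bounded normal operator and $\mu$ is supported on the compact set $\overline{f(\sigma(X))}$, so $\mu$ is a (compactly supported) spectral measure representing $f(X)$.

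Finally, the uniqueness clause of the spectral theorem for bounded normal operators guarantees that the spectral measure representing $f(X)$ is unique; hence $\mu=E_{f(X)}$, which is precisely the asserted identity $E_{f(X)}(\Omega)=E_X(f^{-1}(\Omega))$. I expect the only step genuinely requiring care to be the approximation argument in the change-of-variables formula, where one must ensure the spectral integrals converge in the appropriate sense. The possible unboundedness of $X$ introduces no extra difficulty here: every integrand that arises is of the form $g\circ f$ with both $g$ and $f$ bounded, so each integral against $E_X$ is a bounded operator and the standard bounded functional calculus applies verbatim.
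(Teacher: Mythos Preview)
Your proof is correct and follows essentially the same route as the paper: define the pushforward $\mu(\Omega)=E_X(f^{-1}(\Omega))$, use a change-of-variables identity to show that $\int z\,d\mu(z)=f(X)$, and then invoke uniqueness of the spectral measure. The only difference is cosmetic: the paper works at the level of the scalar measures $E_{X\,\xi,\eta}$ and cites the change-of-measure principle from Rudin, whereas you verify that $\mu$ is a spectral measure and prove the change-of-variables formula by the standard indicator--simple--bounded approximation.
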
 
\begin{proof}
We define a spectral measure by $E'(\Omega)=E_X(f^{-1}(\Omega))$, where $\Omega$ is any Borel subset of $\CC$.  We are going to show that $E'=E_{f(X)}$. 
Since $f$ is bounded, it follows that $f(X) \in \b(\h)$. Moreover, the operator $f(X)$ is given by
\[  \PI{f(X)\xi}{\eta}=\int_{\CC} f (z)\,dE_{X \, \xi , \eta}(z)\,,  \]
where $\xi, \eta \in \h$  and $E_{X \, \xi , \eta}$ is the complex measure defined by $E_{X \, \xi , \eta}(\Omega)=\PI{E_X(\Omega)\xi}{\eta}$ (see \cite[Theorem 12.21]{rudin}). By the change of measure principle (\cite[Theorem 13.28]{rudin}), we have
\[  \int_{\CC} z \, E'_{\xi , \eta}(z) = \int_{\CC} f(z)\, E_{X \, \xi , \eta} (z).   \]
Therefore $E'$ satisfies the equation 
$  \int_{\CC} z \, E'_{\xi , \eta}(z)= \PI{f(X)\xi}{\eta}, $
%By the uniqueness of the spectral measure of $f(X)$, we find that $E'=E_{f(X)}$.
which uniquely determines the spectral measure of $f(X)$ (see \cite[Theorem 12.23]{rudin}). Hence $E'=E_{f(X)}$.
\end{proof}

The following lemma was first proved in \cite[Corollary 2]{schmoeger laa}. 
See also \cite[Corollary 3]{paliogiannis} for another proof. We give below a  proof for the sake of completeness, which does not depend on
further results of these articles.

\begin{lem}\label{real part}
Let $X$ and $Y$ be normal operators in $\b(\h)$. If $e^X=e^{Y}$, then $\Re(X)=\Re(Y)$.
\end{lem}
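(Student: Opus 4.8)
The plan is to realize $e^{\Re(X)}$ as the modulus of $e^X$ through its polar decomposition, and symmetrically for $Y$; since $e^X=e^Y$ forces the two moduli to agree, this will pin down $\Re(X)$ after inverting the exponential on self-adjoint operators. The whole argument stays at the level of the functional calculus, with no need for the full force of Lemma \ref{relation esp}.

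Concretely, I would first write $X=\Re(X)+i\,\Im(X)$, where $A:=\Re(X)$ and $B:=\Im(X)$ are self-adjoint. Normality of $X$ is equivalent to $AB=BA$, so the two commuting self-adjoint operators can be handled by the joint functional calculus, yielding the factorization $e^X=e^{A}e^{iB}$. Here $e^{A}$ is positive and invertible while $e^{iB}$ is unitary, so this is precisely the (left) polar decomposition of the invertible operator $e^X$. The next step is the short computation of the modulus: using $A^*=A$, $B^*=B$ and $AB=BA$,
\[
(e^X)^*\,e^X = e^{-iB}e^{A}\,e^{A}e^{iB}=e^{-iB}e^{2A}e^{iB}=e^{2A},
\]
whence $|e^X|=\big(e^{2A}\big)^{1/2}=e^{A}=e^{\Re(X)}$, the last equality holding because $e^{A}$ is a positive square root of $e^{2A}$ and such a root is unique.

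The identical reasoning applied to $Y$ gives $|e^Y|=e^{\Re(Y)}$. Since $e^X=e^Y$, the positive parts coincide, so $e^{\Re(X)}=e^{\Re(Y)}$. Finally I would invoke the injectivity of the exponential on the self-adjoint operators: applying the real logarithm through the functional calculus, $\Re(X)=\log\big(e^{\Re(X)}\big)=\log\big(e^{\Re(Y)}\big)=\Re(Y)$, which is the desired conclusion.

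The step requiring the most care is the factorization $e^{A+iB}=e^{A}e^{iB}$ together with its interpretation as the polar decomposition; both rest on the commutation $AB=BA$ coming from normality and on the fact that $e^X$ is invertible, so that its positive factor is uniquely determined. The remaining ingredients, namely uniqueness of positive square roots and injectivity of $\exp$ on self-adjoint operators, are standard consequences of the spectral theorem, so I expect the argument to be short once the polar decomposition is set up correctly.
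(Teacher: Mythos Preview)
Your argument is correct and is essentially the paper's own proof, phrased through the polar decomposition: the paper computes $e^{X+X^*}=e^X(e^X)^*=e^Y(e^Y)^*=e^{Y+Y^*}$, which is exactly your identity $|e^X|^2=e^{2\Re(X)}$ (valid because normality makes $X$ and $X^*$, equivalently $A$ and $B$, commute), and then inverts via the real logarithm through the continuous functional calculus just as you do.
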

\begin{proof}
The following 
computation was done in \cite{schmoeger laa}:
\[   e^{X+X^*}=e^Xe^{X^*}=e^X(e^{X})^*=e^Y(e^Y)^*=e^{Y}e^{Y^*}=e^{Y+Y^*},  \]
where the first and last equalities hold because $X$ and $Y$ are normal.  
Now we may finish the proof  in a different fashion: note that the exponential map, restricted to real axis, has an inverse $\log: \RR_+ \to \RR$. Since $\sigma(X+X^*)\subseteq \RR$ and $\sigma(e^{X+X^*})\subseteq \RR_+$, we can use the continuous functional calculus to get  $X+X^*=\log(e^{X+X^*})=\log(e^{Y+Y^*})=Y+Y^*.$ 
\end{proof}

\noi Throughout this paper, we  use the following notation for subsets of the complex plane:
\begin{itemize}    
\item $\Omega_1 + i \Omega_2 =\{ \, x+ i y  \, : \, x \in \Omega_1 , \, y \in \Omega_2 \,  \}$, where $\Omega_i$, $i=1,2$, are subsets of $\RR$.
\item For short, we write $\RR + i a$  for the set $\RR + i \{ \, a \, \}$.
\item  We write $\s$ for the complex strip 
$\{ \, z \in \CC \,  : \, -\pi\leq \Im(z) \leq \pi \, \},$
and   $\s^{\circ}$ for  the interior of $\s$. 
\end{itemize}

%$\Omega_1 + i \Omega_2 =\{ \, x+ i y  \, : \, x \in \Omega_1 , \, y \in \Omega_2 \,  \}$, where $\Omega_i$, $i=1,2$, are subsets of $\RR$. For short, we write $\RR + i a$  for the set $\RR + i \{ \, a \, \}$.  We will denote by $\s$ the  complex strip 
%$$\s=\{ \, z \in \CC \,  : \, -\pi\leq \Im(z) \leq \pi \, \},$$
%and $\s^{\circ}$ will be the interior of $\s$. 

\begin{lem}\label{normal log}
Let $X, Y$ be normal operators in $\b(\h)$ such that $\sigma(X)\subseteq \s$ and $\sigma(Y)\subseteq \s$. Then $e^X=e^Y$ if and only if the  following  conditions hold:
\begin{enumerate}
\item[i)] $E_X(\Omega)=E_Y(\Omega)$ for all Borel  subsets $\Omega$ of $\s^{\circ}$.
\item[ii)] $\Re(X)=\Re(Y)$.
\end{enumerate}
\end{lem}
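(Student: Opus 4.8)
The plan is to translate both conditions into the language of spectral measures and to apply Lemma~\ref{relation esp} to the two bounded Borel functions $z\mapsto e^z$ and $z\mapsto\Re(z)$, both of which are bounded on the compact sets $\sigma(X)$ and $\sigma(Y)$. Since a normal operator is determined by its spectral measure, I would first record the basic equivalence that $e^X=e^Y$ holds if and only if $E_{e^X}=E_{e^Y}$, which by Lemma~\ref{relation esp} means
\[
E_X(f^{-1}(\Delta))=E_Y(f^{-1}(\Delta)) \qquad \text{for every Borel } \Delta\subseteq\CC, \quad f(z)=e^z.
\]
The geometry driving the argument is that $\exp$ maps the open strip $\s^{\circ}$ homeomorphically onto $\CC\setminus(-\infty,0]$, while the two boundary lines $\RR+i\pi$ and $\RR-i\pi$ are both sent onto the negative real axis by $x\pm i\pi\mapsto -e^x$. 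Thus the sole failure of injectivity on $\s$ is the identification of $x+i\pi$ with $x-i\pi$, and this is exactly where condition (ii) must enter.

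For the forward implication I would get (ii) for free, since it is precisely Lemma~\ref{real part}. To obtain (i), I would fix a Borel set $\Omega\subseteq\s^{\circ}$ and put $\Delta=\exp(\Omega)$, which is Borel and disjoint from $(-\infty,0]$. Because $\exp$ is injective on $\s^{\circ}$ and $\Delta$ avoids the negative axis, the full preimage satisfies $f^{-1}(\Delta)\cap\s=\Omega$; since $E_X$ and $E_Y$ are carried by $\s$, the displayed identity then yields $E_X(\Omega)=E_Y(\Omega)$.

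The converse is the substantive part. Assuming (i) and (ii), I would fix a Borel set $\Delta\subseteq\CC$, set $G=f^{-1}(\Delta)\cap\s$, and split it as $G=G_0\sqcup G_+\sqcup G_-$ with $G_0=G\cap\s^{\circ}$ and $G_\pm=G\cap(\RR\pm i\pi)$. On $G_0\subseteq\s^{\circ}$ condition (i) gives $E_X(G_0)=E_Y(G_0)$ directly. For the boundary, the two-to-one behaviour of $\exp$ forces $G_+=A+i\pi$ and $G_-=A-i\pi$ over one and the same base set $A=\{x\in\RR:-e^x\in\Delta\}$, so what remains is the identity $E_X(A+i\pi)+E_X(A-i\pi)=E_Y(A+i\pi)+E_Y(A-i\pi)$. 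To produce it I would feed $g(z)=\Re(z)$ into Lemma~\ref{relation esp}: condition (ii) makes $E_X$ and $E_Y$ agree on every vertical slice $g^{-1}(A)\cap\s=A+i[-\pi,\pi]$, and removing the open middle slice $A+i(-\pi,\pi)\subseteq\s^{\circ}$, on which (i) applies, isolates exactly the sum of the two boundary projections.

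I expect this boundary step to be the only real obstacle. The point is that condition (ii) does not control $E_X(A+i\pi)$ and $E_X(A-i\pi)$ separately --- it cannot, since $X$ and $Y$ are allowed to differ genuinely along the boundary where $\exp$ is non-injective --- but only their sum; combining this with (i) on the interior is what makes the three pieces fit together to give $E_X(G)=E_Y(G)$, and hence $E_{e^X}=E_{e^Y}$ and $e^X=e^Y$.
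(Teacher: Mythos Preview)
Your argument is correct, and the forward direction is essentially the paper's. For the converse you take a genuinely different route. The paper computes $e^X=\int_{\s} e^z\, dE_X(z)$ directly, splitting into the interior integral (matched via (i)) and the two boundary integrals, where $e^z=-e^{\Re(z)}$ collapses them to $-e^{\Re(X)}\bigl(E_X(\RR+i\pi)+E_X(\RR-i\pi)\bigr)$; this is matched against the $Y$-side using (ii) together with the identity $E_X(\RR+i\pi)+E_X(\RR-i\pi)=E_Y(\RR+i\pi)+E_Y(\RR-i\pi)$, obtained simply by complementing $\s^\circ$ in $\s$. You instead work entirely at the level of spectral measures, showing $E_{e^X}(\Delta)=E_{e^Y}(\Delta)$ for every Borel $\Delta$ by pulling back through $\exp$; for the boundary piece you need the finer identity $E_X(A+i\pi)+E_X(A-i\pi)=E_Y(A+i\pi)+E_Y(A-i\pi)$ for an arbitrary Borel $A\subseteq\RR$, which you extract from (ii) via Lemma~\ref{relation esp} applied to $g(z)=\Re(z)$ and then subtracting the interior slice $A+i(-\pi,\pi)$ by (i). The paper's integral computation is a bit shorter; your approach is more uniform and produces this sharpened boundary identity (a refinement of Remark~\ref{line}) as a byproduct. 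One minor point worth making explicit in the forward direction: your claim that $\Delta=\exp(\Omega)$ is Borel relies on $\exp|_{\s^\circ}$ being a homeomorphism onto $\CC\setminus(-\infty,0]$, since continuous images of Borel sets are not Borel in general.
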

\begin{proof}
Suppose that $e^{X}=e^{Y}$. Let $\Omega$ be a Borel measurable subset of $\s^{\circ}$.
By the spectral mapping theorem, 
$$ \sigma(e^X)=\{ \, e^{\lambda} \, : \,\lambda \in \sigma(X) \,  \}=\{ \, e^{\mu} \, : \,\mu \in \sigma(Y) \,  \}=\sigma(e^{Y}).$$
It is well-known that the restriction of the complex exponential map $\exp|_{\s^{\circ}}$ is bijective. Therefore we have
$\sigma(X)\cap \Omega=\sigma(Y) \cap \Omega$, and by Lemma \ref{relation esp}, 
\begin{align*}   
E_X(\Omega) & =E_X(\Omega \cap \sigma(X))= E_X(\,\exp^{-1}(\exp(\Omega \cap \sigma(X)))\,)\\
& = E_{e^X}(\exp(\Omega \cap \sigma(X))) =E_{e^Y}(\exp(\Omega \cap \sigma(Y)))=E_{Y}(\Omega),
\end{align*}
which proves $i)$.  On the other hand,  $ii)$ is proved in Lemma \ref{real part}.

\smallskip 

To prove the converse assertion,  we first note  that
\begin{align} 
E_X(\RR - i \pi) + E_X(\RR + i \pi) & =I-E_X(\s^{\circ})=I-E_Y(\s^{\circ})  \nonumber \\
& =E_Y(\RR - i \pi) + E_Y(\RR + i \pi) \nonumber,   
\end{align}
since  $\sigma(X)\subseteq \s$, $\sigma(Y)\subseteq \s$ and $E_X(\s^{\circ})=E_Y(\s^{\circ})$. Due to the fact that $E_X$ and $E_Y$ coincide on Borel subsets of $\s^{\circ}$, we find that
\[ \int_{\s^{\circ}} e^z \, dE_X (z)=\int_{\s^{\circ}} e^z \, dE_Y (z). \]
Hence we get
\begin{align*}
e^{X}& = \int_{\s} e^z \, dE_X(z) = - \int_{\RR + i \pi} e^{\Re(z)} \, dE_X(z) -   \int_{\RR - i \pi} e^{\Re(z)} \, dE_X(z) + \int_{\s^{\circ}}e^z  \,dE_X(z) \\
& = - e^{\Re(X)}(\,E_X(\RR + i \pi) + E_X(\RR - i \pi)\,) + \int_{\s^{\circ}}e^z \, dE_X(z) \\
& = - e^{\Re(Y)}(\,E_Y(\RR + i \pi) + E_Y(\RR - i \pi)\,) + \int_{\s^{\circ}}e^z \, dE_Y(z)= e^{Y}. \qedhere
\end{align*} 
\end{proof}

\medskip

\begin{rem}\label{line}
We have shown that
$E_X(\RR - i \pi) + E_X(\RR + i \pi)=E_Y(\RR - i \pi) + E_Y(\RR + i \pi)$, 
whenever $X,Y$ are normal bounded operators such that $\sigma(X)\subseteq \s$, $\sigma(Y)\subseteq \s$  and
$e^X=e^Y$.
\end{rem}

\begin{teo}\text{(S. Kurepa \cite{kurepa})}\label{kurepa thm}
Let $X \in \b(\h)$ such that $e^X=N$ is a normal operator. Then 
$$X=N_0 + 2\pi iW,$$ 
where $N_0=\log(N)$ and $\log$ is the 
principal (or any) branch of the logarithm function. The bounded operator $W$ commutes with $N_0$ and there exists a bounded and regular, positive definite self-adjoint operator $Q$ such that $W_0=Q^{-1}WQ$ is a self-adjoint operator the spectrum of which belongs to the set of all integers. 
\end{teo}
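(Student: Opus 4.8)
The plan is to pick a normal logarithm $N_0$ of $N$ from the functional calculus, prove that it commutes with $X$, and thereby reduce the problem to understanding a bounded operator whose exponential is the identity. Since $N=e^X$ is invertible and normal, I set $N_0=\log N$ for a fixed branch of the logarithm on $\sigma(N)$; then $N_0$ is normal, belongs to the von Neumann algebra $\{N,N^*\}''$ generated by $N$, and satisfies $e^{N_0}=N$. The operator $X$ commutes with its own exponential $e^X=N$, and the first key point is that, because $N$ is normal, the Fuglede--Putnam theorem promotes $XN=NX$ to $XN^*=N^*X$. Hence $X$ commutes with the whole algebra $\{N,N^*\}''$, in particular with $N_0$. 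Commutation of $X$ and $N_0$ lets me split the exponential: $e^{X-N_0}=e^Xe^{-N_0}=NN^{-1}=I$. Putting $W=\frac{1}{2\pi i}(X-N_0)$ gives $X=N_0+2\pi iW$, the relation $WN_0=N_0W$ is immediate, and $e^{2\pi iW}=I$.

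The remaining task is to analyze a bounded operator $W$ with $e^{2\pi iW}=I$. By the spectral mapping theorem $\{e^{2\pi i\lambda}:\lambda\in\sigma(W)\}=\sigma(I)=\{1\}$, so $\sigma(W)\subseteq\ZZ$; as a compact subset of the discrete set $\ZZ$ it is a finite set $\{n_1,\dots,n_m\}$. I would attach to each isolated spectral point the Riesz idempotent $P_j$, so that $P_jP_k=\delta_{jk}P_j$, $\sum_{j}P_j=I$ and $WP_j=P_jW$. On the range of $P_j$ the operator $D_j=(W-n_j)P_j$ is quasinilpotent, and $e^{2\pi iW}=I$ forces $e^{2\pi iD_j}=I$ there. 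Writing $e^{2\pi iD_j}-I=(2\pi iD_j)\,g(2\pi iD_j)$ with the entire function $g(z)=(e^z-1)/z$, the spectral mapping theorem gives $\sigma(g(2\pi iD_j))=\{g(0)\}=\{1\}$, so that factor is invertible and $D_j=0$. Therefore $W=\sum_{j=1}^{m}n_jP_j$.

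To produce the similarity I would set $R=\sum_{j=1}^{m}P_jP_j^*$. Since $\sum_jP_j^*=(\sum_jP_j)^*=I$, the Cauchy--Schwarz inequality yields $\|\xi\|^2\leq m\,\PI{R\xi}{\xi}$, so $R$ is a bounded, positive definite, invertible self-adjoint operator. Using $P_kP_j=\delta_{kj}P_k$ and its adjoint $P_j^*P_k^*=\delta_{jk}P_k^*$ one checks $P_kR=P_kP_k^*=RP_k^*$ for every $k$, whence $WR=\sum_kn_kP_kR=R\sum_kn_kP_k^*=RW^*$. With $Q=R^{1/2}$ (again positive definite and invertible), the identity $WR=RW^*$ is equivalent to $Q^{-1}WQ=QW^*Q^{-1}=(Q^{-1}WQ)^*$, so $W_0:=Q^{-1}WQ$ is self-adjoint; being similar to $W$, it has $\sigma(W_0)=\sigma(W)\subseteq\ZZ$, as required.

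I expect the commutation step to be the main obstacle. The bare equality $e^X=e^{N_0}$ does not give $e^{X-N_0}=I$ unless $X$ and $N_0$ commute, and since $X$ is not assumed normal this cannot be read off directly; the Fuglede--Putnam theorem applied to the normal operator $N$ is precisely what converts commutation with $N$ into commutation with $N^*$, and hence with the functional-calculus logarithm $N_0$. The construction of $Q$ is then routine once the idempotent decomposition $W=\sum_jn_jP_j$ is in hand.
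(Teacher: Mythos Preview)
The paper does not supply its own proof of this statement; it is quoted from Kurepa \cite{kurepa} and invoked as a black box in the proof of Theorem~\ref{mod}~$ii)$. There is therefore nothing in the paper to compare against, but your argument is correct and self-contained. The Fuglede step is precisely the right device: $X$ commutes with $N=e^X$, normality of $N$ promotes this to $XN^*=N^*X$, hence $X$ commutes with every element of $\{N,N^*\}''$, in particular with the functional-calculus logarithm $N_0$, and this is exactly what licenses the splitting $e^{X-N_0}=e^Xe^{-N_0}=I$. The remaining analysis of a bounded $W$ with $e^{2\pi iW}=I$ --- spectral mapping giving a finite $\sigma(W)\subset\ZZ$, Riesz idempotents $P_j$, elimination of the quasinilpotent parts $D_j$ via invertibility of $g(2\pi iD_j)$ with $g(z)=(e^z-1)/z$, and the positive-definite $R=\sum_jP_jP_j^*$ satisfying $P_kR=P_kP_k^*=RP_k^*$ and hence $WR=RW^*$ --- is all sound, and your lower bound $R\geq m^{-1}I$ via $\xi=\sum_jP_j^*\xi$ and Cauchy--Schwarz is a clean way to get invertibility of $Q=R^{1/2}$.
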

%\begin{rem}\label{line}
%In the above proof, we have proved the following fact:   
%$$E_X(\RR - i \pi) + E_X(\RR + i \pi)=E_Y(\RR - i \pi) + E_Y(\RR + i \pi),$$
%whenever $X,Y$ are normal bounded operators such that $\sigma(X)\subseteq \s$, $\sigma(Y)\subseteq \s$  and
%$e^X=e^Y$.
%\end{rem}

\section{Modulus and square of logarithms}

Now we show the relation between the modulus of two normal logarithms with spectra contained in $\s$.

\begin{teo}\label{mod}
Let $X$ be a normal operator in $\b(\h)$. Assume that $\sigma(X) \subseteq \s$ and $e^X=e^Y$. 
\begin{enumerate}
\item[i)] If $Y$ is normal in $\b(\h)$ and $\sigma(Y)\subseteq \s$, then $|X|=|Y|$.
\item[ii)] If $Y \in \b(\h)$, then $|X|Y=Y|X|$. 
\end{enumerate}
\end{teo}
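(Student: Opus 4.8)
The plan is to handle both parts through the spectral representation of $|X|$. As $X$ is normal, the functional calculus gives $|X|=\int_{\s}|z|\,dE_X(z)$, and writing $z=\Re z+i\,\Im z$ (scalar real and imaginary parts) we obtain
\[ |X|^2=\int_{\s}\big((\Re z)^2+(\Im z)^2\big)\,dE_X(z), \]
where $\int_\s(\Re z)^2\,dE_X=\Re(X)^2$ and $\int_\s(\Im z)^2\,dE_X=-\Im(X)^2$, so that $|X|^2=\Re(X)^2-\Im(X)^2$. The argument rests on one elementary remark: on the two boundary lines $\RR\pm i\pi$ of $\s$ the scalar $(\Im z)^2$ is the constant $\pi^2$, so the boundary part of $\int_\s(\Im z)^2\,dE_X$ only sees the sum $E_X(\RR+i\pi)+E_X(\RR-i\pi)$, which is precisely the datum governed by Remark~\ref{line}.

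For part i) it suffices to show $\Re(X)^2=\Re(Y)^2$ together with $\int_\s(\Im z)^2\,dE_X=\int_\s(\Im z)^2\,dE_Y$, for then $|X|^2=|Y|^2$ and taking positive square roots yields $|X|=|Y|$. The first identity follows from $\Re(X)=\Re(Y)$ (Lemma~\ref{real part}, equivalently condition ii) of Lemma~\ref{normal log}). For the second I would split the integral over $\s^\circ$ and over the boundary,
\[ \int_\s(\Im z)^2\,dE_X=\int_{\s^\circ}(\Im z)^2\,dE_X+\pi^2\big(E_X(\RR+i\pi)+E_X(\RR-i\pi)\big), \]
using $(\Im z)^2=\pi^2$ on $\RR\pm i\pi$. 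The interior term is unchanged when $X$ is replaced by $Y$ because $E_X=E_Y$ on Borel subsets of $\s^\circ$ (condition i) of Lemma~\ref{normal log}), and the boundary term is unchanged by Remark~\ref{line}. Hence the two integrals agree and $|X|=|Y|$.

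For part ii) there is no spectral measure for $Y$ at our disposal, so instead I would show that $|X|$ is a bounded Borel function of $N:=e^X=e^Y$. Since $N$ is normal and invertible, $0\notin\sigma(N)$, and $g(w):=(\log|w|)^2+(\mathrm{Arg}\,w)^2$ (principal branch) is a bounded Borel function on $\sigma(N)$. By the change of variables of Lemma~\ref{relation esp}, $g(N)=\int_\s g(e^z)\,dE_X(z)$. Now $\log|e^z|=\Re z$ and, crucially, $(\mathrm{Arg}\,e^z)^2=(\Im z)^2$ for every $z\in\s$: the identity would fail for $\mathrm{Arg}$ itself on the line $\RR-i\pi$, but survives after squaring precisely because $\exp$ collapses the two boundary lines of $\s$ onto the negative real axis. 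Therefore $g(e^z)=|z|^2$, so $g(N)=|X|^2$ and $|X|=\sqrt{g}(N)$ is a bounded Borel function of $N$.

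It remains to note that $Y$ commutes with its own exponential $N=e^Y$; by Fuglede's theorem $Y$ also commutes with $N^*$, hence with all spectral projections of $N$ and with every bounded Borel function of $N$, in particular with $|X|=\sqrt{g}(N)$. This gives $|X|Y=Y|X|$. I expect part ii) to be the main obstacle: the phase of $X$ is genuinely not recoverable from $N$ (the two boundary lines are indistinguishable after applying $\exp$), which forces one to pass to the square $|X|^2$, where this ambiguity disappears, and then to invoke Fuglede's theorem in order to transfer commutation from $N$ to functions of $N$ for the possibly non-normal operator $Y$.
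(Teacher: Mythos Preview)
Your proof of part~i) is correct and essentially the same as the paper's: both split $|X|^2$ as the real-part contribution plus the imaginary-part contribution, use Lemma~\ref{normal log} (equality of $\Re(X),\Re(Y)$ and of $E_X,E_Y$ on $\s^\circ$) for the bulk, and invoke Remark~\ref{line} for the boundary lines where $(\Im z)^2=\pi^2$ is constant. The paper phrases the imaginary step as $E_{|\Im(X)|}=E_{|\Im(Y)|}$ on Borel sets, while you integrate $(\Im z)^2$ directly; these are the same computation.

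Your proof of part~ii) is correct and takes a genuinely different, more elementary route. The paper invokes Kurepa's theorem (Theorem~\ref{kurepa thm}) to decompose $Y=N_0+2\pi i W$ with $N_0=\log(e^X)$ normal and $WN_0=N_0W$, then applies part~i) to get $|X|=|N_0|$ and uses Fuglede to pass from $WN_0=N_0W$ to $W|N_0|=|N_0|W$. You bypass Kurepa entirely by observing that $|X|^2=g(N)$ for the Borel function $g(w)=(\log|w|)^2+(\mathrm{Arg}\,w)^2$ on $\sigma(N)$: the key point, as you note, is that the ambiguity of $\mathrm{Arg}$ on the two boundary lines $\RR\pm i\pi$ disappears after squaring, so $g(e^z)=|z|^2$ on all of $\s$. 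Then $Y$ commutes with $N=e^Y$, Fuglede gives commutation with $N^*$ and hence with every bounded Borel function of $N$, in particular $|X|=\sqrt{g}(N)$. This is shorter and self-contained; the paper's route, on the other hand, yields the structural decomposition $Y=N_0+2\pi iW$ as a byproduct, which may be of independent interest.
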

\begin{proof}
$i)$ We will prove that the spectral measures of $|\Im(X)|$ and $|\Im(Y)|$ coincide. Let us set $A=\Im(X)$ and $B=\Im(Y)$. 
Given $\om \subseteq [0,\pi)$, put $\om'=\{ \, x \in \RR \,: \, |x| \in  \om \, \}$. Note that $\RR + i\om' \subseteq \s^{\circ}$. As an application of Lemma \ref{relation esp} and  Lemma \ref{normal log}, we see that
\[   E_{|A|}(\om)=E_A(\om')=  E_X (\RR + i \om')=E_Y (\RR + i \om')=E_B (\om')=E_{|B|}(\om).   \]
By Remark \ref{line}, we have
\begin{align*}
  E_{|A|}(\{ \, \pi \, \} )& =E_A( \{ \, - \pi, \, \pi\, \})=E_X (\RR - i \pi) + E_X(\RR + i \pi )\\
  &=E_Y (\RR - i \pi) + E_Y(\RR + i \pi )=E_{|B|}(\{ \, \pi \, \}). 
\end{align*}
Thus, we have proved  $E_{|A|}=E_{|B|}$, which implies that $|A|=|B|$. On the other hand, by Lemma \ref{normal log}, we know that $\Re(X)=\Re(Y)$. Therefore
$$ |X|^2= \Re(X)^2 + |A|^2 =\Re(Y)^2 + |B|^2 =|Y|^2. $$
Hence $|X|=|Y|$, and the proof is complete.

\smallskip

\noi $ii)$ Since $X$ is a normal operator, $e^X=e^Y$ is also a normal operator. Then by a result by S. Kurepa (see Theorem \ref{kurepa thm}), there exist
operators $N_0$ and $W$ such that $N_0$ is normal, $e^X=e^{N_0}$, $W$ commutes with $N_0$ and $Y=N_0 + 2\pi i W$. In fact, $N_0$ can be defined using the Borel functional calculus by $N_0=\log(e^X)$, where $\log$ is the principal branch of the logarithm. In particular, this implies that $\sigma(N_0)\subseteq \s$. Now we can apply $i)$ to find that $|N_0|=|X|$. Since $N_0W=WN_0$, we have $|N_0|W=W|N_0|$, and this gives $W|X|=|X|W$. Hence $|X|Y=Y|X|$.    
\end{proof}

\medskip

Following  similar  arguments, we can give an alternative proof of a result by Ch. Schmoeger (\cite[Theorem 3]{schmoeger laa}). This result was originally  proved using inner derivations. Note that a minor improvement on the assumption on $\sigma(X)$ over the boundary $\partial \s$ of the strip $\s$ can now be done. Given a set $\om \subseteq \CC$, we denote by $\bar{\om}$ the set $\{ \, x-iy \, : \, x +iy \in \om \, \}$.

\begin{coro}\label{square}
Let $X$ be a normal operator in $\b(\h)$, $\sigma(X)\subseteq \s$, $Y \in \b(\h)$ and $e^X=e^Y$. Suppose that for every Borel subset $\om \subseteq \partial \s \setminus \{ \, -i \pi, \, i \pi\, \}$,  it holds that $E_X(\bar{\om})=0$, whenever $E_X(\om)\neq 0$. Then $X^2Y=YX^2$. 
\end{coro}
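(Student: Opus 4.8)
The plan is to show that $X^2$ is a Borel function of the normal operator $N:=e^X$; since $Y$ commutes with $e^Y=N$, this commutation will transfer to $X^2$ immediately. Concretely, $Y$ commutes with $N=e^Y$ (a norm-convergent power series in $Y$), and $N$ is normal because $X$ is, so by Fuglede's theorem $Y$ also commutes with $N^*$ and hence with every Borel function $h(N)$ of $N$. Thus the whole corollary reduces to exhibiting a bounded Borel function $h$ on $\sigma(N)$ with $X^2=h(N)$: once this is in hand, $X^2Y=h(N)Y=Yh(N)=YX^2$.

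To construct $h$ I would use the geometry of $\exp$ on $\s$. Recall that $\exp$ maps $\s^{\circ}$ bijectively onto $\CC\setminus(-\infty,0]$, maps each boundary line $\RR\pm i\pi$ onto $(-\infty,0)$, and sends $\pm i\pi$ to $-1$; since $N$ is invertible, $0\notin\sigma(N)$. Hence $\exp$ fails to be injective on $\sigma(X)\subseteq\s$ only along $\partial\s$, where the fibre over $w=-r$ ($r>0$) is the conjugate pair $\{\ln r+i\pi,\ln r-i\pi\}$. The decisive point is that squaring collapses precisely the harmless ambiguity: for a boundary point $z=x\pm i\pi$ one has $z^2=x^2-\pi^2\pm 2\pi i x$, so the two members of a fibre have equal squares exactly when $x=0$, i.e. at $z=\pm i\pi$, both giving $-\pi^2$. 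Thus on $\s^{\circ}\cup\{\pm i\pi\}$ the assignment $e^z\mapsto z^2$ is already well defined, and on the rest of $\partial\s$ it is well defined only up to a choice of one point in each conjugate pair.

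This is where the hypothesis is used. By Lemma \ref{relation esp} one has $E_N(\cdot)=E_X(\exp^{-1}(\cdot))$, so $h(N)=\int_{\sigma(X)}h(e^z)\,dE_X(z)$ for bounded Borel $h$; it therefore suffices to find a Borel $h$ on $\sigma(N)$ with $h(e^z)=z^2$ for $E_X$-almost every $z$. On $\exp(\s^{\circ})$ I set $h=(\log)^2$ with the principal branch and put $h(-1)=-\pi^2$, which already give $h(e^z)=z^2$ on $\s^{\circ}\cup\{\pm i\pi\}$. On the remaining negative reals I would fix an inverse branch dictated by the spectral measure: the assumption that $E_X(\bar\om)=0$ whenever $E_X(\om)\neq 0$ (for Borel $\om\subseteq\partial\s\setminus\{\pm i\pi\}$) says that $E_X$ never charges a boundary set and its conjugate simultaneously, which yields a Borel set $B\subseteq\partial\s\setminus\{\pm i\pi\}$ carrying all of the off-$\{\pm i\pi\}$ boundary mass of $E_X$, with $\bar B$ being $E_X$-null and $B\cap\bar B=\emptyset$. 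Choosing the branch on $(-\infty,0)\setminus\{-1\}$ to land in $B$ forces $h(e^z)=z^2$ for every $z\in B$, while the exceptional set $\partial\s\setminus(\{\pm i\pi\}\cup B)$ is $E_X$-null. Consequently $h(e^z)=z^2$ $E_X$-almost everywhere, whence $X^2=\int z^2\,dE_X=\int h(e^z)\,dE_X=h(N)$, completing the reduction.

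The step I expect to be the main obstacle is the construction of $B$ together with the attendant measurable branch, i.e. passing from the hypothesis (stated on individual Borel sets) to a single Borel selection valid $E_X$-almost everywhere. The natural route is to fix a finite scalar measure $\mu$ equivalent to $E_X$ on $\partial\s\setminus\{\pm i\pi\}$ (a scalar spectral measure, available when $\h$ is separable, or otherwise after restricting to the separable reducing subspace generated by the relevant spectral projections) and to read the hypothesis as mutual singularity of $\mu$ and its push-forward under conjugation $z\mapsto\bar z$. A Lebesgue-decomposition argument then produces $B$ with $E_X(\bar B)=0$, and replacing $B$ by $B\setminus\bar B$ makes $B$ and $\bar B$ disjoint without losing mass. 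Everything else—the change-of-variables identity, the appeal to Fuglede's theorem, and the final commutation—is then routine. It is also worth remarking that this mechanism is exactly why one obtains $X^2$ rather than $X$: over $-1$ the fibre $\{\pm i\pi\}$ has distinct points but equal squares, so $X$ itself need not be a Borel function of $e^X$, whereas $X^2$ is under the stated hypothesis.
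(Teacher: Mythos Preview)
Your argument is correct and takes a genuinely different route from the paper's proof. The paper never attempts to realise $X^2$ as a Borel function of $e^X$; instead it shows directly that every spectral projection $E_{X^2}(\Omega_0)$ commutes with $Y$. From $Ye^X=e^XY$ and Lemma~\ref{relation esp} one gets that $E_X(\Omega')$ commutes with $Y$ for $\Omega'\subseteq\s^\circ$, while for $\Omega'\subseteq\partial\s$ only the sum $E_X(\Omega')+E_X(\bar{\Omega}')$ is known to commute with $Y$. The hypothesis is then invoked \emph{locally}: for each $\Omega_0$ on the boundary of $f(\s)$ (with $f(z)=z^2$), writing $f^{-1}(\Omega_0)=(-\Omega')\cup\Omega'$, one of $E_X(\Omega')$, $E_X(\bar{\Omega}')$ vanishes, so the commuting sum collapses to the piece actually needed. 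The virtue of this approach is that it sidesteps entirely the construction of a global measurable section $B$ and hence needs no scalar spectral measure or separability reduction---the hypothesis is used set-by-set, which is exactly its stated form. Your approach, by contrast, is more conceptual: it explains structurally \emph{why} $X^2$ (and not $X$) commutes with $Y$, namely because squaring identifies the two boundary preimages over $-1$ while the hypothesis lets one select measurably among the remaining conjugate pairs, so that $X^2$ becomes a Borel function of $e^X$. The price is the measure-theoretic step you flag (passing from the setwise hypothesis to mutual singularity of $E_X$ and its conjugate push-forward, then extracting $B$), which is routine in the separable case but requires an extra reduction otherwise.
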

\begin{proof}
We will show that   $E_{X^2}(\om_0)$ commutes with $Y$ for every Borel subset $\om_0 \subseteq \sigma(X^2)$. From the equation $e^X=e^Y$, we have  $e^XY=Ye^X$, and thus, 
$E_{e^X}(\om)Y=YE_{e^X}(\om)$ for any Borel set $\om$. Since the set $\om$ is arbitrary, by Lemma \ref{relation esp} we get
\begin{itemize}
\item[1.] $E_{X}(\om')Y=YE_{X}(\om')$ for every subset $\om' \subseteq \s^{\circ}$.
\item[2.] $(E_{X}(\om') + E_{X}(\bar{\om}'))Y=Y(E_{X}(\om') + E_{X}(\bar{\om}'))$, whenever $\om' \subseteq \partial \s$. 
%Here $\bar{\om}'$ denotes the set $\{ \, x - i y \, : \, x + i y \in \om' \,\}$.
\end{itemize}
On the other hand, the image of $\s$ by the  analytic map $f(z)=z^2$ is given by
\[  f(\s)=\{  \, u \pm i 2 t\sqrt{u + t^2}   \,  :   \, u \in [-\pi^2,\infty),  \, u+  t^2  \geq 0 \, \}.  \]
Let us write $f^{-1}(\om_0)=(-\om') \cup \om'$, where $\om'$ is a subset of the half-plane $\Re(z)\geq 0$ and $-\om'$ denotes the set $\{ \, -z \,  : \, z \in \om  \, \}$. 
We need to consider three cases. In the case in which $\om_0 \subseteq f(\s)^{\circ}$, then  $f^{-1}(\om_0)\subseteq \s^{\circ}$. It follows that $E_{X^2}(\om_0)=E_X(-\om') + E_X(\om')$, and by the  item $1.$ above we have $E_{X^2}(\om_0)Y=YE_{X^2}(\om_0)$. In the case where $\om_0 \subseteq \partial f(\s) \setminus \{ \,  \pi^2 \, \}$,   we have either $E_X(\om')=0$ or $E_X(\bar{\om}')=0$  by our assumption on the spectral measure of $X$.  Similarly, it must be either $E(-\om')=0$ or $E_X(-\bar{\om}')=0$. Therefore item 2. above reduces to the desired conclusion, i.e. $E_{X^2}(\om_0)Y=YE_{X^2}(\om_0)$. Finally, if $\om_0=\{ -\pi^2 \}$, then  $E_{X^2}(\om_0)=E_X(\{ \,-i\pi \, \}) + E_X(\{ \,i\pi \, \})$ commutes with $Y$ by item 2., and this concludes the proof. 
\end{proof}

\section{Difference of logarithms}

Let $X, Y$ be normal operators and $k\in \ZZ$. In order to avoid lengthly formulas, let us fix a notation for some  special spectral projections of these operators:
\begin{itemize}
\item $P_{2k+1}=E_X(\,\RR + i(\,(2k-1)\pi,(2k+1)\pi)\,)$;
\item  $Q_{2k+1}=E_Y (\,\RR + i(\,(2k-1)\pi,(2k+1)\pi)\,)$;
\item $E_{2k+1}=E_X(\, \RR + i(2k +1)\pi\,)$;
\item $F_{2k+1}=E_Y(\, \RR + i(2k +1)\pi\,)$.
%\item Projections corresponding to open strips of the form $\RR + i(2(k-1)\pi,2k\pi)$, $k \in \ZZ$, will be denoted by 
%$$P_k=E_X(\RR + i(2(k-1)\pi,2k\pi)), \, \, \, \, \, \, \, \, \, \,  Q_k=E_Y(\RR + i(2(k-1)\pi,2k\pi))$$
%\item Projections corresponding to lines of the form $\RR + i(2k+1)\pi$, $k \in \ZZ$, will be denoted by  
%$$E_k=E_X(\RR + i(2k +1)\pi)),  \, \, \, \, \, \, \, \, \, \, F_k=E_Y(\RR + i(2k +1)\pi)).$$ 
\end{itemize}
As we have  pointed out in the introduction, E. Hille showed that the difference between two logarithms in Banach algebras may be expressed as the sum of multiples of projections   (see \cite[Theorem 4]{Hille}). In order to prove that result,   the spectrum of one of the logarithms must be   incongruent (mod $2\pi i$).      In the case where $X$ and $Y$ are both normal logarithms on a Hilbert space,  the spectral theorem can be used to provide a more general formula.

\begin{teo}\label{dif}
Let $X$ and $Y$ be normal operators in $\b(\h)$ such that $e^X=e^Y$. If $\sigma(X)$ and $\sigma(Y)$  are contained in $\RR + i[ \,(2k_0+1)\pi,(2k_1+1)\pi\,]$ for some $k_0,k_1 \in \ZZ$, then
\[  X-Y = \sum_{k=k_0}^{k_1} 2 k \pi i \, (P_{2k+1} - Q_{2k+1}) + (2k+1)\pi i \,(E_{2k+1} - F_{2k+1}).  \]
\end{teo}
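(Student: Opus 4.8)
The plan is to cancel the real parts using Lemma \ref{real part} and then reduce the remaining identity to a statement about the central strip $\s^{\circ}$ by folding the spectral measures. Writing the spectral decomposition $X=\int_{\sigma(X)} z\,dE_X(z)=\Re(X)+i\int \Im(z)\,dE_X(z)$ (and likewise for $Y$), Lemma \ref{real part} gives $\Re(X)=\Re(Y)$, so
\[ X-Y=i\Big(\int \Im(z)\,dE_X(z)-\int \Im(z)\,dE_Y(z)\Big). \]
Since every coefficient on the right-hand side of the claimed formula is $i$ times a real number, it therefore suffices to prove the self-adjoint identity
\[ \int \Im(z)\,dE_X(z)-\int \Im(z)\,dE_Y(z)=\sum_{k=k_0}^{k_1} 2k\pi\,(P_{2k+1}-Q_{2k+1}) + (2k+1)\pi\,(E_{2k+1}-F_{2k+1}). \]

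First I would split $\int \Im(z)\,dE_X(z)$ along the disjoint Borel partition of the ambient strip $\RR+i[(2k_0+1)\pi,(2k_1+1)\pi]$ into the open strips $\RR+i((2k-1)\pi,(2k+1)\pi)$ and the lines $\RR+i(2k+1)\pi$. On the $k$-th line one has $\Im(z)=(2k+1)\pi$, so that piece equals $(2k+1)\pi\,E_{2k+1}$ and cancels exactly against the line term in the claimed formula. Hence, after subtracting the right-hand side, the problem reduces to proving $A_X=A_Y$, where
\[ A_X:=\sum_{k} \int_{\RR+i((2k-1)\pi,(2k+1)\pi)} \big(\Im(z)-2k\pi\big)\,dE_X(z) \]
and $A_Y$ is defined with $E_Y$ in place of $E_X$. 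The spectral containment hypothesis makes these sums finite and also shows the lowest open strip is empty, $P_{2k_0+1}=Q_{2k_0+1}=0$, which is what lets me write the strip and line sums over the same range $k=k_0,\dots,k_1$.

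The heart of the argument is to fold every open strip onto $\s^{\circ}$. On $\RR+i((2k-1)\pi,(2k+1)\pi)$ the substitution $w=z-2k\pi i$ is a bijection onto $\s^{\circ}$ with $\Im(w)=\Im(z)-2k\pi$, so $A_X=\sum_k\int_{\s^{\circ}}\Im(w)\,dE_X(w+2k\pi i)$. Now set $N_0:=\log(e^X)$ with $\log$ the principal branch; by composition of functional calculi $N_0=h(X)$ for $h(z)=\log(e^z)$, and for any Borel $\om\subseteq \s^{\circ}$ one has $h^{-1}(\om)=\bigcup_k(\om+2k\pi i)$, the boundary lines contributing nothing because $\om$ avoids $\Im=\pm\pi$. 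Lemma \ref{relation esp} then gives $E_{N_0}(\om)=E_X(h^{-1}(\om))=\sum_k E_X(\om+2k\pi i)$, whence $A_X=\int_{\s^{\circ}}\Im(w)\,dE_{N_0}(w)$. The same computation for $Y$ produces $A_Y=\int_{\s^{\circ}}\Im(w)\,dE_{M_0}(w)$ with $M_0=\log(e^Y)$, and since $e^X=e^Y$ forces $N_0=M_0$ we conclude $A_X=A_Y$. Reassembling the real and imaginary parts then yields the stated formula.

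The step I expect to be the main obstacle is the folding identity $E_{N_0}(\om)=\sum_k E_X(\om+2k\pi i)$: one has to verify carefully that on each \emph{open} strip the principal-branch map $h=\log\circ\exp$ is exactly the translation $z\mapsto z-2k\pi i$, and that the boundary lines $\RR+i(2k+1)\pi$ genuinely play no role here---which they do not, precisely because their integrand $\Im(z)-(2k+1)\pi$ vanishes, so that they have already been removed in passing to $A_X$. Everything else is bookkeeping with finite sums of mutually orthogonal spectral projections.
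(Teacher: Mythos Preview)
Your argument is correct and shares the paper's skeleton: cancel real parts via Lemma~\ref{real part}, partition the ambient strip into open horizontal strips and the lines $\Im(z)=(2k+1)\pi$, and fold each open strip onto $\s^{\circ}$. The difference is in how the folded pieces are matched. The paper introduces the explicit sawtooth function $f(t)=\sum_k (t-2k\pi)\chi_{((2k-1)\pi,(2k+1)\pi]}(t)$, notes that $e^{if(A)}=e^{iA}=e^{iB}=e^{if(B)}$ for $A=\Im(X)$, $B=\Im(Y)$, and then invokes the already-established base case (equation~\eqref{dif case 1}, itself a consequence of Lemma~\ref{normal log}) for the pair $if(A),if(B)$, whose spectra now lie in $\s$. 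You bypass the base case altogether: your folded operator is $N_0=\log(e^X)=h(X)$, and the equality $A_X=A_Y$ drops out of the tautology $\log(e^X)=\log(e^Y)$ together with Lemma~\ref{relation esp}. Your route is a bit more direct; the paper's has the virtue of isolating the central-strip case as an independent statement, which it then reuses in Corollary~\ref{first thm}. One small quibble: your closing remark that the boundary integrand ``$\Im(z)-(2k+1)\pi$ vanishes'' is not the operative reason the lines are harmless in the folding identity $E_{N_0}(\om)=\sum_k E_X(\om+2k\pi i)$; the correct reason, which you in fact state earlier, is that $h$ sends those lines into $\partial\s$ and hence outside $h^{-1}(\om)$ for $\om\subseteq\s^{\circ}$.
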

\begin{proof}
We first suppose that $\sigma(X)$  and $\sigma(Y)$ are contained in the strip $\s$. Then we have
%\begin{align*}
$\Im(X)= \Im(X) (  E_X(\s^{\circ}) + E_X(\, \RR + i \pi) + E_X(\, \RR - i \pi) \,)  = \Im (X)P_1 + \pi E_1 - \pi E_{-1}$.
%\end{align*}
Analogously, $\Im(Y)=\Im(Y)Q_1 + \pi F_1 - \pi F_{-1}$. By  Lemma \ref{normal log}, we know that  $\Re(X)=\Re(Y)$ and $E_X(\om)=E_Y(\om)$ for every Borel subset $\om$ of $\s^{\circ}$. It follows that
$$\Im(X)P_1=\int_{\s^{\circ}} \Im(z)\,dE_X(z)=\int_{\s^{\circ}} \Im(z)\,dE_Y(z)=\Im(Y)Q_1, $$
 which implies 
\begin{equation}\label{dif case 1}
 X-Y=  \pi i(E_1 - F_1) -  \pi i (E_{-1} -F_{-1}). 
 \end{equation}
Thus, we have proved the formula in this case. For the general case, without restrictions on spectrum of $X$ and $Y$,  we need to consider the following Borel measurable function
\[    f(t)=\sum_{k=k_0-1}^{k_1} (t - 2k\pi)\, \chi_{\,((2k-1)\pi,(2k+1)\pi]\,}(t), \]
where $\chi_{I}(t)$ is the characteristic function of the interval $I$. Set $A=\Im(X)$ and $B=\Im(Y)$. By Lemma \ref{real part}, $\Re(X)=\Re(Y)$, and since the real and imaginary part of $X$ and $Y$ commute because $X$ and $Y$ are normal, $e^{iA}=e^{X}e^{-\Re(X)}=e^{Y}e^{-\Re(Y)}=e^{iB}$. The function $f$ satisfies $e^{if(t)}=e^{it}$, which implies that  $e^{if(A)}=e^{iA}=e^{iB}=e^{if(B)}$.  Since  $\sigma(f(A))$ and $\sigma(f(B))$ are contained in $[-\pi,\pi]$, we can replace in equation (\ref{dif case 1}) to find that
\begin{align} 
   f(A)-f(B)& =\pi (\,E_{f(A)}(\{ \, \pi \, \}) - E_{f(B)}(\{ \, \pi \, \}\,)   )  \nonumber  \\
    &=  \pi  \sum_{k=k_0-1}^{k_1}  E_A (\{ \, (2k+1)\pi \, \}) -E_B (\{ \, (2k+1)\pi \, \})          \nonumber \\    
     & = \pi \sum_{k=k_0}^{k_1} E_{2k+1} -F_{2k+1} . \label{exp f}
\end{align}
Here we have used Lemma \ref{relation esp} to express $E_{f(A)}, E_A$ and $E_{f(B)}, E_B$ in terms of $E_X$ and $E_Y$ respectively. In particular, note that 
$E_{f(A)}(\{ \, -\pi \, \})=E_{f(B)}(\{ \, -\pi \, \})=0$.  On the other hand, we have
%\begin{align*}
\begin{itemize}
\item[1.] $\displaystyle{f(A) =\sum_{k=k_0-1}^{k_1} (A -2k\pi) \, \chi_{((2k-1)\pi,(2k+1)\pi]}(A)  
= A-  \sum_{k=k_0}^{k_1} 2k\pi (P_{2k+1} + E_{2k+1})}$,
\item[2.] $\displaystyle{f(B)= B-  \sum_{k=k_0}^{k_1} 2k\pi (Q_{2k+1} + F_{2k+1})}$.
\end{itemize}
Therefore 
\begin{align*}
X-Y & =i(A-B) \\
& =  i (f(A)-f(B)) + \sum_{k=k_0}^{k_1} 2k\pi i(P_{2k+1}-Q_{2k+1})  + 2k \pi i(E_{2k+1}- F_{2k+1} ).
\end{align*}  
 Combining this with the expression in (\ref{exp f}), we get the desired formula.
\end{proof}

Below we give a generalization of another result due to Ch. Schmoeger (see \cite[Theorem 5]{schmoeger laa}). The assumptions on the spectrum of $X$ and $Y$ were  more restrictive in \cite{schmoeger laa}: $\|X\|\leq \pi$, $\|Y\|\leq \pi$ and either $-i \pi$ or $i \pi$ does not belong to the point spectrum of one of these operators. However, these hypothesis were necessary to conclude that $X-Y$ is a multiple of a projection; meanwhile $XY=YX$ can be  obtained under more general assumptions (see \cite[Theorem 3]{schmoeger laa}, \cite[Theorem 1.4]{schmoeger proc} and \cite[Theorem 9]{paliogiannis}).

\begin{coro}\label{first thm}
Let $X, Y$ be normal operators in $\b(\h)$. Assume that $\sigma(X) \subseteq \s$, $\sigma(Y)\subseteq \s$ and $e^X=e^Y$. The following assertions hold:
\begin{enumerate}
\item[i)] If $E_1=0$, then $XY=YX$  and $X-Y=-2\pi i \, F_1$.
\item[ii)] If $E_{-1}=0$, then $XY=YX$ and $X-Y=2\pi i \, F_{-1}$.
\item[iii)] If $E_1 = E_{-1}=0$, then $X=Y$.
\end{enumerate}
\end{coro}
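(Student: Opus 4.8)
The plan is to reduce everything to the difference formula of Theorem \ref{dif} together with Remark \ref{line}. Since both spectra lie in $\s=\RR+i[-\pi,\pi]$, I would apply Theorem \ref{dif} with $k_0=-1$ and $k_1=0$, so that $\RR+i[(2k_0+1)\pi,(2k_1+1)\pi]=\s$. The off-strip projections $P_{-1},Q_{-1}$ vanish because $\sigma(X),\sigma(Y)\subseteq\s$, so the general formula collapses to
\[ X-Y=\pi i\,(E_1-F_1)-\pi i\,(E_{-1}-F_{-1}), \]
which is precisely equation (\ref{dif case 1}). Alongside this I would record the identity furnished by Remark \ref{line}, namely $E_1+E_{-1}=F_1+F_{-1}$.

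For i) I would impose $E_1=0$; then Remark \ref{line} gives $E_{-1}=F_1+F_{-1}$, and substituting both relations into the displayed difference formula yields, after cancellation, $X-Y=-2\pi i\,F_1$, which is the asserted expression. To obtain commutativity I would write $X=Y-2\pi i\,F_1$, so that $[X,Y]=-2\pi i\,[F_1,Y]$; since $F_1=E_Y(\RR+i\pi)$ is a spectral projection of $Y$, it commutes with $Y$, and hence $XY=YX$. Part ii) is entirely symmetric: imposing $E_{-1}=0$ gives $E_1=F_1+F_{-1}$, the formula simplifies to $X-Y=2\pi i\,F_{-1}$, and $F_{-1}=E_Y(\RR-i\pi)$ again commutes with $Y$ as one of its spectral projections.

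For iii), under $E_1=E_{-1}=0$ Remark \ref{line} forces $F_1+F_{-1}=0$; since $F_1$ and $F_{-1}$ are orthogonal projections, hence positive operators, this already gives $F_1=F_{-1}=0$, and the difference formula then returns $X=Y$. Alternatively one can simply combine i) and ii): the two expressions $-2\pi i\,F_1=X-Y=2\pi i\,F_{-1}$ force $F_1=-F_{-1}$, and positivity again yields $F_1=F_{-1}=0$.

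I expect no serious obstacle here; the content is essentially bookkeeping with the formula from Theorem \ref{dif}. The one point worth flagging is the commutativity claim, which in Schmoeger's original treatment was obtained through inner-derivation arguments and might look as though it requires Kurepa's theorem (Theorem \ref{kurepa thm}) or additional work. What makes it immediate in the present framework is that $X-Y$ is, in each case, a scalar multiple of a \emph{single} spectral projection of $Y$; since any spectral projection of $Y$ commutes with $Y$, the commutator $[X,Y]$ is a scalar multiple of that vanishing commutator and is therefore zero. The only real care needed is to specialize the index range in Theorem \ref{dif} correctly and to verify that the off-strip projections $P_{-1},Q_{-1}$ vanish.
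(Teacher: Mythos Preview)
Your proposal is correct and follows essentially the same route as the paper: both invoke the identity $E_1+E_{-1}=F_1+F_{-1}$ from Remark~\ref{line} together with equation~(\ref{dif case 1}), then substitute the vanishing hypothesis and read off the claimed formulas; commutativity follows because $X-Y$ is a multiple of a spectral projection of $Y$. The only cosmetic difference is that you re-derive equation~(\ref{dif case 1}) by specializing the statement of Theorem~\ref{dif} with $k_0=-1$, $k_1=0$, whereas the paper simply quotes that equation from inside the proof of Theorem~\ref{dif}.
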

\begin{proof}
\noi $i)$ Under these assumptions on the spectra of $X$ and $Y$, we have established that $E_{-1}+ E_1=F_{-1} + F_1$ in Remark \ref{line}. On the other hand, by equation (\ref{dif case 1}) in the proof of Theorem \ref{dif}, we know that
$X-Y=\pi i(E_1 - F_1) -  \pi i (E_{-1} -F_{-1})$.
  Since $E_1=0$, we have $E_{-1}=F_1 + F_{-1}$. It follows that $X=-2 \pi i F_1 + Y$. Hence $X$ and $Y$ commute. 
We can similarly conclude  that  $ii)$ holds true.  To prove $iii)$, note that $E_1=E_{-1}=0$ implies that
$F_1 + F_{-1}=0$, and consequently, $F_1=F_{-1}=0$.  Hence we get $X=Y$.
\end{proof}

\section{Unbounded logarithms}

Let $X$ be a self-adjoint unbounded operator on $\h$. As before, $E_X$ denotes the spectral measure of $X$.   In item $i)$ of our next result, we will give a version of \cite[Theorem 1.4]{schmoeger proc} for unbounded operators (see also \cite[Theorem 9]{paliogiannis}).  To this end, we extend the definition given in \cite{schmoeger proc} for bounded operators: a self-adjoint  unbounded operator $X$ is \textit{generalized $2\pi$-congruence-free} if $$E_X(\sigma(X)\cap \sigma(X+ 2k \pi))=0, \, \, \, \, \, \, \,  k=\pm 1, \pm 2 , \ldots \, .$$
 Given $Y \in \b(\h)$, the commutant of     $Y$ is the set 
\[  \{ \, Y \, \}' = \{  \, Z \in \b(\h) \, : \, ZY=YZ \, \}.   \]
 The double commutant of $Y$ is defined by
\[  \{ \, Y \,\}''= \{ \, W \in \b(\h)  \, : \, WZ=ZW, \text{ for all } Z  \in \{ \, Y \,\}'   \, \}.  \]
If $X$ is a self-adjoint unbounded operator and $Y \in \b(\h)$, recall that $XY=YX$, that is $X$ commutes with $Y$, if $YE_X(\om)=E_X(\om)Y$ for every Borel subset $\om \subseteq \RR$. 
Recall that the exponential $e^{iX}$ of a self-adjoint unbounded operator $X$ is a unitary operator, which can be defined via the Borel functional calculus (see e.g. \cite{rudin}).

\begin{teo}\label{unbounded}
Let   $X$ be a self-adjoint operator on $\h$ and $Y \in \b(\h)$  such that $e^{iX}=e^Y$. 
\begin{enumerate}
\item[i)] If $X$ is generalized $2\pi$-congruence-free,  then $E_X(\om) \in \{ \, Y  \, \}''$ for all Borel subsets $\om$ of $\RR$. In particular, $XY=YX$. 
\item[ii)] If   $\{ \,  (2k+1)\pi \, : k \in \ZZ \, \} \cap \sigma_p(X)$ has at most one element and $Y$ is normal in $\b(\h)$  such that $\sigma(Y) \subseteq \s $, then $XY=YX$.
\item[iii)]  If  $(2k+1) \pi \notin \sigma_p(X)$ for all $k \in \ZZ$ and $Y$ is normal in $\b(\h)$  such that $\sigma(Y) \subseteq \s $, then $Y \in \{ \, e^{iX} \, \}''$. 
\end{enumerate}
\end{teo}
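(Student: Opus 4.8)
The plan is to reduce all three parts to a single mechanism: whenever the complex exponential is injective on the spectrum of a (possibly unbounded) normal operator off a set of spectral measure zero, that operator is a Borel function of its exponential and hence lies in the double commutant of the exponential. Throughout write $U=e^{iX}=e^{Y}$, a unitary operator, and let $\vp(\lambda)=e^{i\lambda}$. The bookkeeping device is Lemma \ref{relation esp} applied to $\vp$, which gives $E_U(\om)=E_X(\vp^{-1}(\om))$ for every Borel $\om\subseteq\CC$. Since $\vp^{-1}(\{-1\})=\{(2k+1)\pi:k\in\ZZ\}$, this yields the identity
\[ E_U(\{-1\})=E_X(\vp^{-1}(\{-1\}))=\sum_{k\in\ZZ}E_X(\{(2k+1)\pi\}), \]
which is what translates the eigenvalue hypotheses on $X$ into information about $U$ at the point $-1$, the one point of the circle that the logarithm on $\s$ cannot separate.

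For i), I would first observe that generalized $2\pi$-congruence-freeness is precisely essential injectivity of $\vp$ on $\sigma(X)$: putting $N=\bigcup_{k\neq 0}(\sigma(X)\cap\sigma(X+2k\pi))$ we have $E_X(N)=0$, and if $\lambda,\mu\in\sigma(X)\setminus N$ satisfy $\vp(\lambda)=\vp(\mu)$ then $\lambda-\mu\in 2\pi\ZZ$ forces $\lambda=\mu$. To manufacture a global Borel inverse, I would partition $\RR$ into the half-open fundamental intervals $I_k=((2k-1)\pi,(2k+1)\pi]$, on each of which $\vp$ is a Borel isomorphism onto the circle with Borel inverse $g_k$. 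The images $A_k$ of $(\sigma(X)\setminus N)\cap I_k$ under $\vp$ are pairwise disjoint (a common point would be a genuine congruence, hence lie in $N$), so $g:=\sum_k g_k\,\chi_{A_k}$ is a well-defined real-valued Borel function on the circle with $g(\vp(\lambda))=\lambda$ for every $\lambda\in\sigma(X)\setminus N$. Applying Lemma \ref{relation esp} twice gives $E_{g(U)}(\om)=E_X((g\circ\vp)^{-1}(\om))=E_X(\om)$, whence $g(U)=X$ (the unbounded functional calculus of the unitary $U$ accommodates the unbounded real Borel function $g$). Thus each $E_X(\om)=E_U(g^{-1}(\om))\in\{U\}''$; since $\{Y\}'\subseteq\{e^Y\}'=\{U\}'$ gives $\{U\}''\subseteq\{Y\}''$, we get $E_X(\om)\in\{Y\}''$, and as $Y\in\{Y\}'$ this yields $XY=YX$.

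For ii) and iii) I would first extract the structure of $Y$. Normality gives $e^{Y+Y^*}=e^Y(e^Y)^*=UU^*=I$, and since $Y+Y^*$ is bounded self-adjoint this forces $Y+Y^*=0$; hence $Y=iB$ with $B$ self-adjoint, and $\sigma(Y)\subseteq\s$ gives $\sigma(B)\subseteq[-\pi,\pi]$. In iii), the hypothesis $(2k+1)\pi\notin\sigma_p(X)$ together with the displayed identity gives $E_U(\{-1\})=0$, hence $E_B(\{-\pi\})=E_B(\{\pi\})=0$. On $[-\pi,\pi]$ the only failure of injectivity of $\vp$ is the identification of $\pm\pi$, so $\vp$ is injective on $\sigma(B)$ off the $E_B$-null set $\{\pm\pi\}$; the principal branch of the logarithm supplies the Borel inverse, and exactly as in i) one gets $B=g(U)\in\{U\}''$, so $Y=iB\in\{U\}''=\{e^{iX}\}''$. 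For ii), set $P=E_U(\{-1\})$; the displayed identity gives $P=E_X(\{\lambda^*\})$, where $\lambda^*$ is the unique odd multiple of $\pi$ in $\sigma_p(X)$ (if there is none then $P=0$ and iii) applies directly), while $P=E_Y(\{i\pi\})+E_Y(\{-i\pi\})$ shows $P$ is also a spectral projection of $Y$; thus $P$ reduces $X$, $Y$ and $U$ simultaneously. On $P\h$ the operator $X$ acts as the scalar $\lambda^*$, so it commutes with $Y|_{P\h}$; on $(I-P)\h$ we have $E_U(\{-1\})=0$, so the argument of iii) gives $Y|_{(I-P)\h}\in\{U|_{(I-P)\h}\}''$, which commutes with the restriction of each $E_X(\om)$ (the latter lying in $\{U\}'$). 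Adding the two pieces gives $YE_X(\om)=E_X(\om)Y$ for all $\om$, i.e. $XY=YX$.

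The main obstacle is the measurable inversion common to all three parts, and it is most delicate in i): one must assemble a single globally defined, real-valued (hence generally unbounded) Borel branch of the logarithm out of the branches on the fundamental domains, verify that congruence-freeness makes their ranges disjoint, and then justify $g(U)=X$ as unbounded self-adjoint operators purely through the spectral-measure identity of Lemma \ref{relation esp} rather than any continuity argument. The second point requiring care is the exact translation of each spectral hypothesis into the correct null-set statement—congruence-freeness into essential injectivity of $\vp$ in i), and the eigenvalue conditions into $E_U(\{-1\})=0$ in ii)--iii)—together with checking that the projection $P$ in ii) genuinely reduces $X$, $Y$ and $U$, so that the scalar behaviour of $X$ on $P\h$ and the functional-calculus argument on $(I-P)\h$ can be combined.
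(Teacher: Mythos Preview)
Your proposal is correct and follows a genuinely different route from the paper's proof.

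For i), the paper argues directly: given $Z\in\{Y\}'$ it observes that $Z$ commutes with every $E_X(\vp^{-1}(\om))=\sum_k E_X(\om'+2k\pi)$, and then uses congruence-freeness to show that at most one summand is nonzero, so $Z$ commutes with each individual $E_X(\om'+2k\pi)$. You instead package the same idea into the construction of a single global Borel section $g$ of $\vp$ on $\sigma(X)$ (modulo an $E_X$-null set), obtaining $E_X(\om)=E_U(g^{-1}(\om))$ in one stroke; this is cleaner about boundary bookkeeping (your half-open $I_k$'s avoid the double-counting at odd multiples of $\pi$ that the paper's $[-\pi,\pi]$ convention invites) and makes the ``$X$ is a function of $U$'' statement explicit. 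Note that for the bare conclusion $E_X(\om)\in\{U\}''$ you only need Lemma~\ref{relation esp} for the bounded map $\vp$, so the unbounded functional calculus for $g$ is not strictly required.

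For ii) and iii), the paper introduces the folding map $f(t)=\sum_k(t-2k\pi)\chi_{((2k-1)\pi,(2k+1)\pi]}(t)$, so that $if(X)$ is a bounded normal operator with spectrum in $\s$ and $e^{if(X)}=e^{Y}$, and then invokes Corollary~\ref{first thm} (built on Theorem~\ref{dif} and Lemma~\ref{normal log}) to obtain the explicit relations $f(X)=Y+2\pi iF_{-1}$ in ii) and $f(X)=Y$ in iii), from which the commutation is read off case by case. Your approach is more self-contained: you first deduce $Y=iB$ with $\sigma(B)\subseteq[-\pi,\pi]$ from $UU^*=I$, then translate the eigenvalue hypothesis into information about $P=E_U(\{-1\})$ and reduce by $P$, handling the scalar piece and the $P=0$ piece separately via iii). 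This avoids Corollary~\ref{first thm} and Theorem~\ref{dif} entirely, at the cost of not producing the explicit formula $f(X)=Y+2\pi iF_{-1}$ that the paper obtains along the way.
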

\begin{proof}
$i)$  
Let $Z \in \b(\h)$ such that $ZY=YZ$. It follows that $Ze^{Y}=e^YZ$. Then we have $Ze^{iX}=e^{iX}Z$, and by Lemma \ref{relation esp}, $ZE_{X}(\exp^{-1}(\om))=E_{X}(\exp^{-1}(\om))Z$ for every $\om \subseteq \mathbb{T}$. If $\om'= \exp^{-1}(\om) \cap [-\pi,\pi]$, then
\[  E_{X}(\exp^{-1}(\om))= \sum_{k \in \ZZ} E_X(\om' + 2k  \pi),  \]
where  this series converges  in the strong operator topology. Suppose now that there is some $k \in \ZZ $ such that $E_X(\om' + 2k\pi) \neq 0$. It follows that $\sigma(X) \cap (\om' + 2k\pi) \neq \emptyset$, and $(\om' + 2l\pi) \cap \sigma(X)\subseteq \sigma(X) \cap \sigma(X+2(l-k)\pi)$ for all $l \in \ZZ$. By the assumption on the spectral measure of $X$, $E_X(\om' + 2l  \pi)\leq E_X(\sigma(X) \cap \sigma(X+2(l-k)\pi))=0$ for $l\neq k$. Therefore for each $\om$, the above series reduces to only one spectral projection corresponding to a set of the form $\om' + 2k \pi$. Hence $Z$ commutes with all the spectral projections of $X$.

\smallskip
\noi $ii)$ We need to consider the Borel measurable function $f$ defined in the proof of Theorem \ref{dif}. 
Since $e^{iX}=e^{Y}$, we have that $e^{if(X)}=e^Y$. Recall that $E_X(\{ \, (2k+1)\pi  \, \})\neq0$ if and only if $(2k+1)\pi \in \sigma_p(X)$ (\cite[Theorem 12.19]{rudin}). By the hypothesis on the eigenvalues of $X$,  there is  at most one $n_0 \in \ZZ$ such that $E_X(\{ \, (2n_0+1)\pi  \, \})\neq0$. According to Lemma \ref{relation esp}, we get
\[ E_{f(X)}(\{ \, \pi \, \})= \sum_{k \in \ZZ} E_X(\{ \, (2k+1)\pi \,\} )=E_X(\{ \, (2n_0+1)\pi \,\} ). \]
On the other hand, $E_{f(X)}(\{ \, -\pi  \, \})=0$ for all $k \in \ZZ$ by  definition of the function $f$. According to Corollary \ref{first thm} $ii)$, it follows that $f(X)=Y+2\pi i F_{-1}$.  By Remark \ref{line}, we also know that $E_X(\{ \, (2n_0+1)\pi \,\} )=F_{-1} + F_1$. In order to show that $Y$ commutes with all the spectral projections of $X$, we divide into two cases. If  $\om\subseteq \CC \setminus \{ \, (2k+1)\pi \, : \, k \in \ZZ \, \}$, note that 
$E_X(\om)F_{-1}=0$ because $F_{-1}\leq E_X(\{ \, (2n_0+1)\pi \,\} )$. Hence we get 
$$E_X(\om)Y=E_X(\om)(f(X)-2\pi i F_{-1})=E_X(\om)f(X)=f(X)E_X(\om)=YE_X(\om).$$
If $\om \subseteq \{ \, (2k+1)\pi \, :   \,  k \in \ZZ     \, \}$, we only need to prove that $E_X(\{ \, (2n_0+1)\pi \,\} )$  commutes with $Y$. This follows immediately, because $E_X(\{ \, (2n_0+1)\pi \,\} )$ is the sum of two spectral projections of $Y$. 

\smallskip
\noi $iii)$  As in the proof of $ii)$, we have $e^{if(X)}=e^Y$. Now by the assumption on
 the eigenvalues of $X$, it follows that
\begin{equation}\label{zero sum} 
E_{f(X)}(\{ \, -\pi, \, \pi \, \})= \sum_{k \in \ZZ} E_X(\{ \, (2k+1)\pi \,\} )=0. 
\end{equation}
Applying Corollary \ref{first thm} $iii)$, we get $f(X)=Y$. In particular, $Y$ is a self-adjoint operator such that $\sigma(Y)\subseteq [-\pi,\pi]$.

Let $Z \in \b(\h)$ such that $Ze^{iX}=e^{iX}Z$.  Then we have
$ZE_{e^{iX}}(\om)=E_{e^{iX}}(\om)Z$ for every Borel set $\om \subseteq \mathbb{T}$. We are going to show that $ZE_{Y}(\om')=E_{Y}(\om')Z$ for every $\om'\subseteq [-\pi,\pi]$. We need to consider two cases.  If $\om'\subseteq (-\pi,\pi)$, there exists a unique 
set $\om \subseteq \mathbb{T}\setminus\{ \, -1 \,\}$ such that $\exp^{-1}(\om) \cap [-\pi,\pi]=\om'$. Therefore
\[   E_Y(\om')=E_{f(X)}(\om')=\sum_{k \in \ZZ} E_X(\om' + 2k \pi)=E_X(\exp^{-1}(\om))=E_{e^{iX}}(\om). \]
If $\om'\subseteq \{ \, -\pi, \, \pi \, \}$, by equation (\ref{zero sum}) we find
that $E_Y(\om')=E_{f(X)}(\om')=0$. Hence we obtain that $Z$ commutes with every spectral projection of $Y$. The latter is equivalent to saying that  
$Z$  commute with $Y$, and this concludes the proof.      
\end{proof}

\subsection*{\small{Acknowledgment}}
I would like to thank   Esteban Andruchow  and Gabriel Larotonda for suggesting me to prove  Theorem 
\ref{mod}  $i)$. I am also grateful to them for several helpful conversations.

\medskip

\noi 
{\sc {Departamento de de Matem\'atica, FCE-UNLP,Calles 50 y 115, 
(1900) La Plata, Argentina 
 and 
 Instituto Argentino de Matem\'atica, `Alberto P. Calder\'on', CONICET, Saavedra 15 3er. piso,
(1083) Buenos Aires, Argentina.}     }
                                               
\noi e-mail: {\sf eduardo@mate.unlp.edu.ar}

\end{document}